\theoremstyle{definition}
\newtheorem{definition}{Definition}
\newtheorem{theorem}{Theorem}
\newtheorem{lemma}{Lemma}
\newcommand{\NN}{\mathbb{N}}
\newcommand{\ZZ}{\mathbb{Z}}
\newcommand{\QQ}{\mathbb{Q}}
\newcommand{\RR}{\mathbb{R}}
\newcommand{\ZX}{\mathbb{Z}[X]}
\newcommand{\Neg}{%
  \mathord{\hbox{\raisebox{0.15ex}{$\neg$}\hspace{-0.67em}\raisebox{-0.15ex}{$\neg$}}}
}
\title{\huge Material Interpretation and Constructive Analysis of Maximal Ideals in $\ZZ[X]$\footnote{This work was founded by the Austrian Science Fund - FWF under Grant 10.55776/ESP576.\\ Great thanks also go to the anonymous reviewers, whose comments and suggestions have substantially enhanced the quality of this article.}}
\author{
Franziskus Wiesnet \orcidlink{0000-0003-3870-6984}
\\
{\normalsize Vienna University of Technology}\\
{\normalsize \href{mailto:franziskus.wiesnet@tuwien.ac.at}{franziskus.wiesnet@tuwien.ac.at}}
}
\date{}
\begin{document}
\maketitle
\begin{abstract}
This article presents the concept of material interpretation as a method to transform classical proofs into constructive ones. Using the case study of maximal ideals in \texorpdfstring{$\ZZ[X]$}{Z[X]}, it demonstrates how a classical implication \texorpdfstring{$A \to B$}{``A implies B"} can be rephrased as a constructive disjunction \texorpdfstring{$\Neg A \vee B$}{``-A or B"}, with \texorpdfstring{$\Neg A$}{-A} representing a strong form of negation. The approach is based on Gödel’s Dialectica interpretation, the strong negation, and potentially Herbrand disjunctions.

The classical proof that every maximal ideal in \texorpdfstring{$\ZZ[X]$}{Z[X]} contains a prime number is revisited, highlighting its reliance on non-constructive principles such as the law of excluded middle. A constructive proof is then developed, replacing abstract constructs with explicit case distinctions and direct computations in \texorpdfstring{$\ZZ[X]$}{Z[X]}. This proof clarifies the logical structure and reveals computational content.

The article discusses broader applications, such as Zariski’s Lemma, Hilbert’s Nullstellensatz, and the Universal Krull-Lindenbaum Lemma, with an emphasis on practical implementation using  tools such as Python and proof assistants. The material interpretation offers a promising framework for bridging classical and constructive mathematics, enabling algorithmic implementations.

\textbf{Keywords:} material interpretation, constructive algebra, program extraction, strong negation, proof interpretation
\end{abstract}
\section{Motivation: Material Interpretation}
\label{Sec:Intro}
Based on the case study of maximal ideals in $\mathbb{Z}[X]$, our aim is to illustrate the concept of material interpretation. The approach begins with a (possibly classical) proof of a statement of the form $A \to B$ and aims to transform it as far as possible into a proof of the form $\Neg A \vee B$. Here, $\Neg A$ should represent a stronger form of the negation of $A$ which is quite similar to what is known as \textit{strong negation} \cite{gurevich1977intuitionistic,koepp2024strong,nelson1949constructible}.
For our purposes, it is sufficient to know that the strong negation of $A$ is a statement with constructive content that implies the ordinary negation $\neg A$ and in particular
\begin{align}
\label{Form:StrongNeg}
\Neg(\forall_x(A_1\wedge\dots \wedge A_k))\quad \Leftrightarrow \quad \exists_x (\Neg A_1 \vee\dots\vee \Neg A_k). 
\end{align}
In our case, the strong negation for prime formulas is simply the ordinary negation.
Many algebraic definitions, in particular that of a maximal ideal, have exactly the form $\forall_x(A_1\wedge\dots \wedge A_k)$. Therefore, we have a way to redefine their negation in a constructive stronger version.

A formula of the form $\Neg A \vee B$ can often be found using Gödel's Dialectica interpretation \cite{goedel1958ueber}, as Gödel’s interpretation was used in \cite{powell2019algorithmic,powell2022universal} to motivate a similar constructive approach. 
For example, if in Formula (\ref{Form:StrongNeg}) the $A_i$ are atomic and decidable formulas and $\Neg$
denotes the standard negation, then the right-hand side is equivalent to Gödel's Dialectica interpretation of the left-hand side.
Herbrand disjunctions could also play a role in this context, as they have a similar form and can be derived from the Dialectica interpretation as well \cite{gerhardy2005extracting}.

Since a classical proof of $A\to B$ typically relies on some non-constructive axioms, it will generally not be possible to transform it directly into a constructive proof. Therefore, the idea is to slightly modify $A$ and $B$ to $\tilde{A}$ and $\tilde{B}$ in advance to obtain a constructive proof of $\Neg \tilde{A} \vee \tilde{B}$. The new statements $\tilde{A}$ and $\tilde{B}$ should remain as close as possible to $A$ and $B$ and the modifications should be derivable from the classical proof. At first glance, this may seem very abstract, but let us now examine it in a more concrete way using our case study on maximal ideals in $\ZZ[X]$:

\section{A Case Study: Maximal Ideal in \texorpdfstring{$\ZZ[X]$}{Z[X]}}
\label{Sec:CaseStudy}
For clarity, we will present the classical definition of ideal variants in a ring. Note that in this article, every ring is commutative.
A subset $I\subseteq A$ of a ring $A$ is called an \emph{ideal} if
\begin{itemize}\setlength{\itemsep}{-2pt}
\item[--] $0\in I$,
\item[--] $\forall_{x,y\in I}\ x+y\in I$,
\item[--] $\forall_{\lambda\in A, x\in I}\ \lambda x\in I$.
\end{itemize}
An ideal $P$ is called a \emph{prime ideal} if $1\notin P$ and $\forall_{x,y\in A}(xy\in P\to x\in P \vee y \in I$).
An ideal $M$ is called a \emph{maximal ideal} if $1\notin M$ and there is no ideal $J$ with $M\subseteq J$ and $1\notin J$. Classically (but not constructively) equivalent to the last property is that for all $x\notin M$ there is $\lambda\in A$ with $\lambda x -1 \in M$. 

With these definitions in place, we now illustrate the concept of material interpretation through the case study that every maximal ideal in $\ZZ [X]$ contains a prime number. As is well known, this theorem can even be extended to the statement that every maximal ideal is of the form $$\langle p, f\rangle := \{\lambda p + \mu f\mid \lambda, \mu \in A \},$$ where $p$ is a prime ideal and $f\in \ZZ[X]$ is a polynomial such that its canonical projection $\overline{f}\in (\ZZ/ p\ZZ) [X]$ is irreducible (see \cite[Theorem 5.3]{conradmaximal}). 
However, the main task is to prove the existence of the prime number $p$. The extended statement follows quite directly and is not of particular interest for our purposes.

In the spirit of what we discussed in the first section, particularly Formula~(\ref{Form:StrongNeg}), our aim is to transform the statement 
\begin{quote}
\emph{Let $M\subseteq \ZZ[X]$ be a maximal ideal, then $M$ contains a prime number.}
\end{quote}
along with a classical proof, into a constructive proof of the statement:
\begin{quote}
\emph{Let $M\subseteq \ZZ[X]$ be any subset, then either $M$ contains a prime number or there is concrete evidence that $M$ is not a maximal ideal.}
\end{quote}
We will also work out exactly what is meant by ``concrete evidence that $M$ is not a maximal ideal''.
\subsection{Examination of the Classical Proof}
We begin by presenting the classical proof that every maximal ideal in $\ZZ[X]$ contains a prime number. In doing so, we also present all lemmas whose proofs are necessary to our constructive proof of Theorem \ref{Thm:Main}.  
The statement is often viewed as an application of Zariski's Lemma, which can be used to prove Hilbert's Nullstellensatz \cite{mccabe1976note,zariski1947new}. Indeed, our proof does not directly use Zariski's Lemma but does adopt some of its ideas, as Lemma \ref{Lem:IntField} and Lemma \ref{Lem:IntExt} can also be used in the proof of Zariski's Lemma. We will return to a constructive treatment of Zariski's Lemma in Section \ref{Sec:Zariskis}.
\begin{lemma}\label{Lem:NotField}
$\ZZ [d^{-1}]$ is not a field for $d\in \ZZ\setminus \{0\}$.
\end{lemma}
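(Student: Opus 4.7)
The plan is to exhibit an explicit nonzero element of $\ZZ[d^{-1}]$ that has no multiplicative inverse, which immediately shows that $\ZZ[d^{-1}]$ fails to be a field (and does so constructively, not just by refuting the field axioms classically).

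A natural candidate is any prime number $p$ that does not divide $d$. Such a prime can be produced explicitly, for instance by taking the smallest prime strictly greater than $|d|$; this certainly cannot divide $d$. The first step is therefore to fix such a $p$. The second step is to recall the standard description of the localization: every element of $\ZZ[d^{-1}]$ can be written in the form $a/d^n$ with $a \in \ZZ$ and $n \in \NN$, and two such expressions represent the same element iff they agree after clearing a suitable power of $d$.

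The core step is then to suppose that $p \cdot (a/d^n) = 1$ in $\ZZ[d^{-1}]$ for some $a \in \ZZ$ and $n \in \NN$, and derive a contradiction. Clearing denominators inside $\ZZ$ yields an identity of the form $p \cdot a \cdot d^k = d^{n+k}$ for some $k\in\NN$ (the extra $d^k$ absorbs the equivalence relation of the localization). Since $p$ is prime and $p\nmid d$, Euclid's lemma gives $p \nmid d^{n+k}$, while the left-hand side is divisible by $p$. This contradiction shows that $p$ is not a unit of $\ZZ[d^{-1}]$. Since $p \ne 0$, we conclude that $\ZZ[d^{-1}]$ is not a field.

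The only mildly delicate point is bookkeeping the equivalence-relation-of-the-localization: one must remember that $p\cdot(a/d^n)=1$ does not literally give $pa = d^n$ in $\ZZ$ but only after multiplying by a power of $d$. Beyond that, the argument is completely elementary and entirely constructive, with the witnessing prime $p$ computable from $d$.
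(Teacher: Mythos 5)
Your proof is correct and takes essentially the same approach as the paper's: fix a prime not dividing $d$, assume it has an inverse in $\ZZ[d^{-1}]$, clear denominators, and derive a divisibility contradiction. The only cosmetic differences are that the paper writes the hypothetical inverse as $\sum_i a_i d^{-i}$ rather than $a/d^n$, and that your extra factor $d^k$ for the localization equivalence is not actually needed here since $\ZZ$ is an integral domain and $d\neq 0$, so $\ZZ \hookrightarrow \ZZ[d^{-1}]$ is injective.
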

We present the proof in detail here so that we can later see the parallels in the constructive part. In many articles, this fact -- if mentioned at all -- is proved in only two lines, for example the last two lines in the proof of Theorem~5.1 in \cite{conradmaximal}.
\begin{proof}
Let $q$ be a prime number that does not divide $d$, and assume that $\ZZ[d^{-1}]$ is a field. Then there exist $a_0, \dots, a_n \in \ZZ$ such that $q^{-1} = \sum_{i=0}^n a_i d^{-i}$. Multiplying this equation by $qd^{n}$ gives
$$ d^n = q\sum_{i=0}^n a_id^{n-i},$$
where $\sum_{i=0}^n a_i d^{n-i} \in \ZZ$. Hence, $q$ divides $d^n$, and since $q$ is prime, $q$ divides $d$, which leads to a contradiction.
\end{proof}

For the classical proof, we need the definition of an integral ring extension and some properties of it. For a ring extension $A\subseteq B$, an element $x\in B$ is called \textit{integral} over $A$ if there are $a_0,\dots,a_{n-1}\in A$ with
$$ x^n+a_{n-1}x^{n-1}+\dots+a_1x+a_0 = 0.$$
The ring extension $A\subseteq B$ is called an \textit{integral ring extension} if all $x\in B$ are integral over $A$.

The following two lemmas provide the necessary properties of integral ring extensions we need. 
Even though the proof of particularly the next lemma is straightforward and well known, for example in  \cite[Lemma 2.1.7]{huneke2006integral}. Again, we present it here in full, as the reader will encounter this proof idea in the constructive proof.

\begin{lemma}\label{Lem:IntField}
Let $A\subseteq B$ be an integral ring extension. If $B$ is a field, so is $A$. 
\end{lemma}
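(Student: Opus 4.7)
The plan is to take an arbitrary nonzero $a \in A$ and exhibit its inverse as an element of $A$, using the integral equation satisfied by $a^{-1} \in B$. Since $A$ inherits commutativity and a multiplicative identity from $B$, and $B$ being a field means $A$ has no zero divisors, the only content is to verify that every nonzero element of $A$ has a multiplicative inverse inside $A$.

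First I would fix $a \in A$ with $a \neq 0$. Because $B$ is a field, the inverse $a^{-1} \in B$ exists. By the integrality hypothesis, there are coefficients $c_0, \dots, c_{n-1} \in A$ with
$$ (a^{-1})^n + c_{n-1}(a^{-1})^{n-1} + \dots + c_1 a^{-1} + c_0 = 0. $$
Next I would multiply both sides by $a^{n-1}$ to clear all but one power of $a^{-1}$, obtaining
$$ a^{-1} + c_{n-1} + c_{n-2} a + \dots + c_1 a^{n-2} + c_0 a^{n-1} = 0, $$
which immediately rearranges to
$$ a^{-1} = -\bigl(c_{n-1} + c_{n-2} a + \dots + c_0 a^{n-1}\bigr) \in A. $$
Thus $a$ is invertible in $A$, and since $a$ was an arbitrary nonzero element, $A$ is a field.

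The step I expect to be the main (and only) obstacle is the initial appeal to an integral equation for $a^{-1}$: one must remember that the definition of integrality requires the leading coefficient to be $1$, which is precisely what allows the multiplication by $a^{n-1}$ to leave an isolated $a^{-1}$ term whose companion polynomial has coefficients in $A$. From a constructive standpoint, notice that this argument already looks fairly algorithmic: given $a$ and the integrality witness for $a^{-1}$, the formula above \emph{computes} the inverse. This computational content is what the paper will later exploit, so when writing the proof I would keep the manipulation entirely explicit rather than invoking any abstract property of integral extensions.
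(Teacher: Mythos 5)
Your proof is correct and follows essentially the same argument as the paper: take an integral equation for $a^{-1}$, multiply through by $a^{n-1}$, and read off $a^{-1}$ as a polynomial in $a$ with coefficients in $A$. Incidentally, your intermediate display after multiplying by $a^{n-1}$ is written out more carefully than the paper's, whose final line retains spurious powers of $x^{-1}$ that should have been cleared --- a typographical slip, not a different method.
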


\begin{proof}
Let $x\in A\setminus \{0\}$ be arbitrary. As $B$ is a field, there is $x^{-1}\in B$. We show $x^{-1}\in A$:

\noindent The field extension $A\subseteq B$ is integral, hence there are $a_0,\dots,a_{n-1}$ with 
\begin{align*}
(x^{-1})^n+a_{n-1}(x^{-1})^{n-1}+ \dots +a_1x^{-1}+a_0 = 0.
\end{align*}
Multiplying this equation by $x^{n-1}$ and isolating $x^{-1}$ leads to
\begin{align*}
x^{-1}= -a_{n-1}-a_{n-2}x^1-\dots-a_0x^{n-1} \in A.
\end{align*}

\end{proof}

\begin{lemma}\label{Lem:IntExt}
Let $A\subseteq B$ be a ring extension generated by an element $x\in B$, i.e.~$B=A[x]$. If $x$ is integral over $A$, also the ring extension $A\subseteq B$ is integral.
\end{lemma}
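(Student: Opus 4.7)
The plan is to use the classical \emph{determinant trick} (essentially Cayley--Hamilton over a commutative ring) to produce a monic equation over $A$ that an arbitrary element $y\in B=A[x]$ satisfies. The starting observation is that the integrality of $x$ gives a relation $x^n = -a_{n-1}x^{n-1}-\dots-a_1 x-a_0$, so every power $x^k$ for $k\ge n$ can be reduced modulo this relation to a polynomial in $x$ of degree $<n$. Consequently, $B=A[x]$ is generated as an $A$-module by the finite family $1,x,\dots,x^{n-1}$.

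Next, I would pick an arbitrary $y\in B$ and exploit this finite generating set. For each $i\in\{0,\dots,n-1\}$, the element $y\cdot x^i$ lies in $B$, so it can again be written as
$$ y\cdot x^i \;=\; \sum_{j=0}^{n-1} m_{ij}\, x^j $$
with $m_{ij}\in A$. Collecting these $n$ equations into matrix form with $v=(1,x,\dots,x^{n-1})^{T}$ gives $(yI-M)v=0$, where $M=(m_{ij})\in A^{n\times n}$.

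The final step is the determinant trick: multiply $(yI-M)v=0$ on the left by the adjugate of $yI-M$ to obtain $\det(yI-M)\cdot v=0$. Reading off the first coordinate yields $\det(yI-M)\cdot 1=0$. Expanding $\det(yI-M)$ as a polynomial in $y$ shows that it is monic of degree $n$ with coefficients in $A$, which is exactly the integral equation required for $y$.

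The only mildly subtle point is the determinant trick itself: it relies on the identity $\operatorname{adj}(N)\cdot N=\det(N)\cdot I$ applied to $N=yI-M$, which has entries in the commutative ring $B$. Since this identity is a purely formal polynomial identity over any commutative ring, it is available constructively and causes no trouble. Everything else — reducing higher powers of $x$, expressing $y\cdot x^i$ in the basis $1,x,\dots,x^{n-1}$, and extracting the characteristic polynomial — is routine and algorithmic, so the argument remains entirely explicit in the spirit of the paper.
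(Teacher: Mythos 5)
Your proof is correct and follows essentially the same route as the paper: reduce powers of $x$ to show $1,x,\dots,x^{n-1}$ generate $B$ as an $A$-module, express $y\cdot x^i$ in this generating set, and apply the determinant trick with the adjugate of $yI-M$ (the paper's matrix $(z\delta_{ij}-a_{ij})_{ij}$ is precisely your $yI-M$). The only cosmetic difference is that you explicitly note $\det(yI-M)$ is monic of degree $n$, which the paper leaves implicit in ``expanding the definition of $M$ and the determinant.''
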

The proof is a combination of Proposition 2.4 and Proposition 5.1 in \cite{atiyah2018introduction}:
\begin{proof}
As $x$ is integral over $A$, there are $a_0,\dots,a_{n-1}\in A$ with 
\begin{align}
x^n+a_{n-1}x^{n-1}+\dots+a_1x+a_0 =0. \label{IntEq}
\end{align}
Given $y\in B=A[x]$ there are $b_0,\dots,b_m\in A$ with $y = b_mx^m+\dots+b_1x+b_0$. By Equation (\ref{IntEq}) we can reduce this to 
\begin{align*}
y = c_{n-1}x^{n-1}+\dots+c_1x+c_0
\end{align*}
for some $c_0,\dots,c_{n-1}\in A$. Hence, all elements in $B$ can be represented as a linear combination of $1,x,x^2,\dots, x^{n-1}$ in $A$. Therefore, $B$ is a finitely generated $A$-Module.

With these preliminaries in place, let $z\in B$ be given and we show that $z$ is integral over $A$:

For $i\in \{0,\dots, n-1\}$ we get $a_{ij}\in A$ such that $zx^i = \sum_{j=0}^{n-1} a_{ij}x^j$ by the first part of this proof. Using the Kronecker delta, we obtain
\begin{align*}
\sum_{j=0}^{n-1} (z\delta_{ij}-a_{ij})x^j = 0.
\end{align*}
Let $M$ be the matrix $$M:=\left(z\delta_{ij}-a_{ij}\right)_{ij}.$$ In matrix representation, we obtain
\begin{align*}
M\begin{pmatrix}
1\\ x\\ \vdots \\ x^{n-1}
\end{pmatrix}  = 0.
\end{align*}
Multiplying on the left by the adjoint matrix $\hat{M}$ of $M$ and using $\hat{M}M = \operatorname{det}(M)E$ with $E$ being the unit matrix, we get $\operatorname{det}(M) = 0$ on the first line. Expanding the definition of $M$ and the determinant, we have an integral equation of $z$.
\end{proof}

With this preparation, the proof of the classical statement is now fairly brief:

\begin{theorem}\label{Thm:Class}
Let $M\subseteq \ZZ[X]$ be a maximal ideal. Then there is a prime number $p$ such that $p\in M$.
\end{theorem}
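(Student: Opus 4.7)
The plan is to proceed by contradiction, using the standard argument that exploits the fact that $\ZZ[X]/M$ is a field together with Lemmas \ref{Lem:NotField}, \ref{Lem:IntField}, and \ref{Lem:IntExt}. Suppose that no prime number lies in $M$. Since every maximal ideal in a commutative ring with unit is prime, and since any non-zero integer $n$ factors into primes, this assumption should actually force $M\cap \ZZ = \{0\}$, because the presence of any non-zero integer in $M$ would, by primality, place one of its prime factors in $M$.

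From $M\cap\ZZ=\{0\}$ I would deduce that the canonical map $\ZZ\to K:=\ZZ[X]/M$ is injective, and since $K$ is a field, it extends to an embedding $\QQ\hookrightarrow K$. Writing $\bar X$ for the image of $X$, I then have $K=\QQ[\bar X]$. The next step is to observe that $\bar X$ must be algebraic over $\QQ$: otherwise $\QQ[\bar X]\cong \QQ[X]$, which is not a field, contradicting maximality of $M$. So there is a monic relation $\bar X^n+q_{n-1}\bar X^{n-1}+\dots+q_0=0$ with $q_i\in\QQ$.

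Now I would clear denominators: choose $d\in\ZZ\setminus\{0\}$ such that each $q_i\in \ZZ[d^{-1}]$. The same relation then witnesses that $\bar X$ is integral over $\ZZ[d^{-1}]$, so by Lemma \ref{Lem:IntExt} the extension $\ZZ[d^{-1}]\subseteq \ZZ[d^{-1}][\bar X]$ is integral. Since $\ZZ[d^{-1}][\bar X]\supseteq\QQ[\bar X]=K$ and also lies inside $K$, the larger ring is exactly $K$, which is a field. Applying Lemma \ref{Lem:IntField} yields that $\ZZ[d^{-1}]$ is a field as well, in direct contradiction to Lemma \ref{Lem:NotField}.

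The main obstacle, and the reason the argument is so classical in flavour, is the very first step: the case distinction "either $M$ contains some prime, or $M\cap\ZZ=\{0\}$" uses the law of excluded middle over an infinite set of primes, and the dichotomy "$\bar X$ algebraic vs.\ transcendental" over $\QQ$ is likewise non-constructive. Both are exactly the points the paper flags as candidates for the material interpretation; the lemmas themselves are sufficiently concrete that their content transfers readily, but extracting an explicit prime $p\in M$ from this reductio will require replacing these dichotomies by effective witnesses, which is presumably the subject of the constructive proof to follow.
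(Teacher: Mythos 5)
Your argument is essentially the paper's: you assume no prime lies in $M$, conclude $M\cap\ZZ=\{0\}$, produce an integral relation for $\bar X$ over $\ZZ[d^{-1}]$ inside the field $\ZZ[X]/M$, and then run Lemma \ref{Lem:IntExt} $\to$ Lemma \ref{Lem:IntField} $\to$ Lemma \ref{Lem:NotField} to a contradiction. The one place you diverge is in how the integral relation is obtained. The paper does not pass through $\QQ$ or invoke the algebraic-vs.-transcendental dichotomy; it produces a concrete non-constant $f\in M$ by the single decidable case split ``$X\in M$ or not'' (taking $f=X$ in the first case and $f=\nu(X)X-1$ in the second, using maximality), and then takes $d$ to be the leading coefficient of $f$, so that $d^{-1}f$ is directly a monic relation for $\bar X$ over $\ZZ[d^{-1}]$. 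This matters for the paper's agenda: the case distinction on $X\in M$ survives into the constructive proof unchanged, whereas your transcendence dichotomy over $\QQ$ is an extra non-constructive step that would have to be eliminated. Also note a small slip in your chain of inclusions: $\ZZ[d^{-1}][\bar X]\supseteq\QQ[\bar X]$ is not because $\ZZ[d^{-1}]\supseteq\QQ$ (false), but because $\ZZ[X]/M=\ZZ[\bar X]\subseteq\ZZ[d^{-1}][\bar X]$ and $\ZZ[\bar X]=\QQ[\bar X]$; it is cleaner to argue directly that $\ZZ[d^{-1}][\bar X]=\ZZ[X]/M$.
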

\begin{proof}
If $X\notin M$, there exists some $g\in \ZZ[X]$ with $gX-1\in M$ because $M$ is a maximal ideal. $gX-1$ is not constant since its constant coefficient is $-1$ and $g\neq 0$ because otherwise $1\in M$. Hence, in both cases ($X\in M$ and $X\notin M$) there is some non-constant $f\in M$. Let $d$ be the leading coefficient of $f$.

We now assume that there is no prime number $p$ with $p\in M$. As $M$ is maximal and therefore a prime ideal, it follows $M\cap \ZZ = \{0\}$. Hence, the canonical homomorphism $\ZZ \to \ZZ[X]/M$ is injective and induces a ring extension $\ZZ[d^{-1}]\to \ZZ[X]/M$, where $d^{-1}f$ is an integral polynomial for $X \in \ZZ[X]/M$. As $f\in M$ we have $d^{-1}f = 0$ in $\ZZ[X]/M$. By Lemma \ref{Lem:IntExt} the ring extension $\ZZ[d^{-1}]\to \ZZ[X]/M$ is integral. As $\ZX /M$ is a field, $\ZZ[d^{-1}]$ must also be a field by Lemma \ref{Lem:IntField}, which is a contradiction to Lemma \ref{Lem:NotField}.
\end{proof}

\subsection{Developing a Constructive Proof}
\label{Sec:ConstProof}
What stands out in the classical proof is that it often uses case distinctions based on whether an element belongs to the maximal ideal $M$ or not. If we now want to transform the classical proof into a constructive proof that stays as close as possible to the classical proof, we will need to retain this principle. This means that we must assume that membership in M is decidable.
To make this decidability explicit, we will not view $M$ as a subset of a ring $R$ (in our case $R =\ZZ [X]$) in the constructive proof, but rather as a function from $R$ to the Boolean set $\mathbb{B} := \{\operatorname{tt},\operatorname{ff}\}$. Here, we always consider functions to be total, i.e.~$M(f)$ is either $\operatorname{tt}$ or $\operatorname{ff}$ for all $f\in \ZZ[X]$.
However, we still use the notation $x \in M$ and $x \notin M$, where we mean $M(x) =\operatorname{tt}$ and $M(x) =\operatorname{ff}$, respectively.

We also need to refine the notion of a maximal ideal here, as there are several variants of maximal ideals that are classically equivalent. In the proof, maximality was used in the sense that if an element $a$ is not in $M$, then there is an element $\lambda$ with $\lambda a-1\in M$. 
This leads us to the notion of an explicit maximal ideal, which we even define for any ring $R$. 

\begin{definition}
\label{Def:ExplMaxIdeal}
Let $R$ be a ring. For a subset $M: R \to \mathbb{B}$ and a function $\nu:R\to R$, we say that $(M,\nu)$ is an \textit{explicit maximal ideal} if $M$ is an ideal, $1\notin M$ and $a\nu(a)-1\in M$ for all $a\in R\setminus M$. 

Furthermore, we say that there is \textit{evidence that} $(M,\nu)$ \textit{is not an explicit maximal ideal} if one of the following cases holds:
\renewcommand{\labelenumi}{\theenumi)}
\begin{enumerate}
\item $0\notin M$,
\item there are $a,b\in M$ with $a+b\notin M$,
\item there are $\lambda\in R$ and $a\in M$ with $\lambda a \notin M$,
\item $1\in M$, or
\item there is $a\in R\setminus M$ with $a\nu(a)-1\notin M$.
\end{enumerate}
\end{definition}

\noindent \textit{Remark.} The evidence that $(M,\nu)$ is not an explicit maximal ideal constitutes a stronger version of the statement that $M$ is not a maximal ideal.

Note that if there is evidence that $(M,\nu)$ is not an explicit maximal ideal, we want to be able to explicitly specify which of the statements 1--5 is satisfied. In the case of statements 2, 3, or 5, we also want the elements $a,b$ and $\lambda$ to be explicitly provided.
\vspace{2mm}

The following lemma is the constructive variant of the classical statement that every maximal ideal is also a prime ideal. Classically, this statement is straightforward to prove. Constructively, however, we need to explicitly identify the specific factor of a product in $M$ that also lies in $M$. In short terms: each explicit maximal ideal is an explicit prime ideal.

In our case, however, we even need a stronger form of this statement, which is intended to precisely capture the material interpretation. Therefore, we will formulate this statement and provide a proof as follows:
\begin{lemma}\label{Lem:MaxToPrime}
Let $R$ be a ring, $M: R \to \mathbb{B}$, $\nu: R \to R$ and $a_1,\dots,a_n\in R$ be given such that $a_1\cdot\ldots\cdot a_n \in M$. Then, either some $a_i$ is in $M$, or there is evidence that $(M,\nu)$ is not an explicit maximal ideal.
\end{lemma}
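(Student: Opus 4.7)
The proof proceeds by induction on $n$, exploiting the fact that $M:R\to\mathbb{B}$ gives decidable membership and that $\nu$ lets us turn ``$a\notin M$'' into an identity $a\nu(a)-1\in M$ whenever $(M,\nu)$ really is an explicit maximal ideal. The base case $n=1$ is immediate. For the step, set $a:=a_1$ and $b:=a_2\cdots a_n$, so $ab\in M$ by hypothesis, and decide whether $a\in M$. If $a\in M$, we output $i=1$ and are done.

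If $a\notin M$, we first test whether $a\nu(a)-1\in M$. If not, this is precisely evidence of type (5) and we are finished. If yes, the task reduces to deriving $b\in M$ from $ab\in M$ and $a\nu(a)-1\in M$, after which the inductive hypothesis applied to $a_2\cdots a_n\in M$ finishes the proof. The derivation of $b\in M$ is where the closure checks (and thus the possible evidence of types (2) and (3)) enter: multiply $ab\in M$ by $\nu(a)$ to obtain $a\nu(a)b\in M$ (a case-(3) check, producing evidence if the product is not in $M$); multiply $a\nu(a)-1\in M$ by $b$ to obtain $a\nu(a)b-b\in M$ (another case-(3) check); multiply this by $-1$ to obtain $b-a\nu(a)b\in M$ (one more case-(3) check); finally add these two elements, which yields $b$ (a case-(2) check). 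At each of these four steps, if the resulting element is not in $M$, we immediately output the corresponding piece of evidence.

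Schematically, the identity we exploit is
\[
b \;=\; \nu(a)\cdot(ab) \;+\; (-1)\cdot\bigl((a\nu(a)-1)\cdot b\bigr),
\]
so that only the ideal axioms are needed to pass from $ab\in M$ and $a\nu(a)-1\in M$ to $b\in M$; any failure along the way is detected by a Boolean test on $M$ and returned as the appropriate evidence. Once $b\in M$ is established, we invoke the induction hypothesis on $a_2\cdots a_n\in M$.

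I do not expect a serious obstacle: both the decidability of $M$ and the availability of $\nu$ make every required branch effective, and the evidence clauses (2), (3), (5) were tailored exactly to the closure and inversion steps used above. The only delicate point is bookkeeping — making sure that, whenever a test fails, the witnesses $a,b,\lambda$ returned fit the format prescribed by the definition of evidence — but this is straightforward since at each failed test the relevant elements are in hand.
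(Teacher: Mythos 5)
Your proof is correct and follows essentially the same route as the paper: induction on $n$, a decidable test on $a_1\in M$, use of $\nu$ to obtain $a_1\nu(a_1)-1\in M$ (or evidence of type~5), and then a short chain of ideal-closure checks based on the identity $b=\nu(a)(ab)-\bigl(a\nu(a)-1\bigr)b$ to pass to $a_2\cdots a_n\in M$ before invoking the induction hypothesis. The only cosmetic difference is the base case: the paper starts at $n=0$, where the empty product gives $1\in M$ and hence evidence of type~4, whereas you start at $n=1$; both are fine.
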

\begin{proof}
Induction over $n$. For $n=0$ it follows that $a_1\cdot\ldots\cdot a_n := 1\in M$, which is evidence that $(M,\nu)$ is not an explicit maximal ideal. For the induction step, let $a_0\cdot\ldots\cdot a_n\in~\!M$. If $a_0\in M$, the proof is complete. Otherwise, $a_0\nu(a_0)-1\in M$ or there is evidence that $(M,\nu)$ is not an explicit maximal ideal.\\
 Moreover , $\nu(a_0)\cdot a_0\cdot\ldots\cdot a_n \in M$ or there is evidence that $(M,\nu)$ is not an explicit maximal ideal. Hence, we can assume that $a_0\nu(a_0)-1\in M$ and $\nu(a_0)\cdot a_0\cdot\ldots\cdot a_n\in M$. This implies $(1- a_0\nu(a_0))\cdot a_1\cdot\ldots\cdot a_n \in M$ and $\nu(a_0)\cdot a_0\cdot\ldots\cdot a_n\in M$ or there is evidence that $(M,\nu)$ is not an explicit maximal ideal. It follows that $a_1\cdot\ldots\cdot a_n\in M$, or there is evidence that $(M,\nu)$ is not an explicit maximal ideal. Applying the induction hypothesis to $a_1\cdot\ldots\cdot a_n\in M$ completes the proof.
\end{proof}

The following lemma provides a more general form of polynomial division, which takes into account that the divisor is not necessarily monic.

\begin{lemma}\label{Lem:PolyDivision}
Let $f,g\in \ZX$ and $d\neq 0$ be the leading coefficient of $f$. Then there is $k\in \NN$ and $h\in \ZX$ such that $\deg(d^kg+hf)<\deg(f)$.
\end{lemma}
\begin{proof}
Let $m:=\deg(f)$ and $n:=\deg(g)$.
For fixed $m$, we use induction on $n$. If $n<m$, we take $k:=0$ and $h := 0$. Otherwise, let $c$ be the leading coefficient of $g$. Then $\deg(dg-cx^{n-m}f) < n$, thus we obtain $k'$ and $h'$ such that $\deg(d^{k'}(dg-cx^{n-m}f)+h'f)<m$. Hence, $k:=k'+1$ and $h:= h'-d^{k'}cx^{n-m}$ fulfil the requirement.
\end{proof}

With these two lemmas, we can now prove the material interpretation of Theorem~\ref{Thm:Class}. Note that we do not need a material interpretation of Lemmas \ref{Lem:NotField}--\ref{Lem:IntExt}, as they are implicitly incorporated into the proof. As a result, the proof itself may appear longer than the classical proof. However, this is not the case, as the corresponding lemmas are already integrated into the proof and even simplified.

\begin{theorem}
\label{Thm:Main}
Let $M\subseteq \ZZ[X]$ and $\nu: \ZZ[X]\to \ZZ[X]$ be an arbitrary map. Then, either there exists a prime number $p\in M$, or there is evidence that $(M,\nu)$ is not an explicit maximal ideal in $\ZZ[X]$.
\end{theorem}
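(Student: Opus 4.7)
The plan is to follow the classical proof of Theorem~\ref{Thm:Class} step by step, but to thread an ``or else evidence'' disjunction through every invocation of an ideal axiom or of the operator $\nu$. Instead of deriving a contradiction at the end, the argument will construct an explicit nonzero integer $N \in M$, whose prime factorisation together with Lemma~\ref{Lem:MaxToPrime} delivers the sought prime $p \in M$.

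First I would produce a non-constant $f \in M$ with leading coefficient $d$: decide whether $X \in M$, taking $f := X$ and $d := 1$ if so; otherwise consult $\nu(X)$, obtaining either $X\nu(X) - 1 \in M$ or evidence of case~(5). In the former subcase, $\nu(X) = 0$ would give $-1 \in M$, from which one quickly extracts evidence; otherwise $f := X\nu(X) - 1$ is non-constant, with $d$ equal to the leading coefficient of $\nu(X)$.

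Next I would pick a prime $q$ with $q \nmid d$ (any prime exceeding $|d|$ works) and decide whether $q \in M$: if so, set $p := q$; otherwise obtain $q\nu(q) - 1 \in M$ or evidence of case~(5). I would then imitate the determinant argument of Lemma~\ref{Lem:IntExt}, using Lemma~\ref{Lem:PolyDivision} as the reduction tool: for each $i = 0, \dots, n-1$, it supplies $k_i, h_i$ with $\deg(d^{k_i}\nu(q) X^i + h_i f) < n$, and setting $K := \max_i k_i$ and rescaling produces an integer matrix $A = (A_{ij})$ such that each component of $(d^K \nu(q)\,E - A)(1, X, \dots, X^{n-1})^{\mathsf T}$ lies in $M$. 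Multiplying by the adjugate and reading off the first coordinate would give
\[ (d^K \nu(q))^n + c_{n-1}(d^K \nu(q))^{n-1} + \cdots + c_0 \in M \]
with integer $c_i$; multiplying by $q^n$ and replacing each $(q\nu(q))^k$ by $1$ via $q\nu(q) - 1 \in M$ collapses every term into an integer, yielding
\[ N := d^{Kn} + c_{n-1}\,q\,d^{K(n-1)} + \cdots + c_0\,q^n \in M. \]

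Finally, since $q \nmid d^{Kn}$ while $q$ divides every other summand, $q \nmid N$ and in particular $N \neq 0$. I would then factor $|N|$ into primes and apply Lemma~\ref{Lem:MaxToPrime} to conclude either some $p_i \in M$ (our answer) or evidence. I expect the main obstacle to be the meticulous bookkeeping in the determinant step: every additive or multiplicative closure, every rescaling by $d^{K - k_i}$, and every replacement of $(q\nu(q))^k$ by $1$ must either be honoured by $(M,\nu)$ or translated into one of the five explicit forms of evidence. No single manipulation is deep, but their accumulation is what makes the constructive proof substantially longer than its classical counterpart.
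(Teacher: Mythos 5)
Your proposal is correct and follows the paper's proof essentially step for step: construct a non-constant $f \in M$ via $X$ or $X\nu(X)-1$, pick a prime $q \nmid d$ and test $q\in M$, run the polynomial-division/determinant argument built on Lemma \ref{Lem:PolyDivision}, reduce modulo $q\nu(q)-1$ to obtain a nonzero integer in $M$, then factor it and invoke Lemma \ref{Lem:MaxToPrime}. The only deviations are cosmetic: you normalise every row of the matrix to $d^{\max_i k_i}$ before taking the determinant, whereas the paper keeps $d^{k_i}$ in row $i$ and sets $K := \sum_i k_i$, and you spell out the degenerate case $\nu(X)=0$ that the paper leaves tacit.
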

\begin{proof}
First, we construct some non-constant $f\in M$: If $X\in M$, we are done. Otherwise, $X\notin M$, and therefore $X\nu(X)-1\in M$ or there is evidence that $(M,\nu)$ is not an explicit maximal ideal.
Let $d$ be the leading coefficient of $f$ and $n:=\deg(f)$. If $n=0$ we have $-1 = f \in M$ and there is witness that $(M,\nu)$ is not an explicit maximal ideal. Thus, we can assume that $f$ is non-constant and $n>0$.

We take some prime number $q$ which is not a divisor of $d$ and consider $\nu(q)\in \ZX$. We check if $q\in M$ or $m:=q\nu(q)-1\notin M$, if yes, the proof is complete. Otherwise, we continue:
For each $i\in\{0,\dots, n-1\}$ we apply $\nu(q)x^i$ to Lemma \ref{Lem:PolyDivision} and get some $k_i\in\NN$, $h_i\in \ZX$ and $(a_{ij})_{j\in\{0,\dots,n-1\}}\in \ZZ^{n}$ with
\begin{align} \label{Eq:DivRem1}
d^{k_i}\nu(q)x^i + h_if = \sum_{j=0}^{n-1}a_{ij}x^j.
\end{align}
Using the Kronecker delta $(\delta_{ij})_{ij}$ we get
\begin{align}\label{Eq:DivRem2}
\sum_{j=0}^{n-1}(d^{k_i}\nu(q)\delta_{ij}-a_{ij})x^j = -h_if.
\end{align}
Let $A$ be the matrix $(d^{k_i}\nu(q)\delta_{ij}-a_{ij})_{i,j\in\{0,\dots,n-1\}}$ then we have 
\begin{align}\label{Eq:MatrixNota1}
A \begin{pmatrix}
1\\ x\\ \vdots \\ x^{n-1}
\end{pmatrix} = \begin{pmatrix}
-h_0f\\ -h_1f\\ \vdots \\ -h_{n-1}f
\end{pmatrix}
\end{align}
Multiplying both sides by the adjugate matrix $\hat{A}$ of $A$ and using $\hat{A}A = \det(A)I$ leads to
\begin{align}\label{Eq:MatrixNota2}
\begin{pmatrix}
\det(A)\\ \det(A)x\\ \vdots \\ \det(A)x^{n-1}
\end{pmatrix} = \hat{A}\begin{pmatrix}
-h_0f\\ -h_1f\\ \vdots \\ -h_{n-1}f
\end{pmatrix}
\end{align}
In particular, the first line is $\det(A) = -\sum_{j=0}^{n-1}\hat{A}_{0j}h_jf$. Looking at the definition of $A$, we have $\det(A) = d^K\nu(q)^n+b_{n-1}\nu(q)^{n-1}+\dots+b_1\nu(q)+b_0$ for some $b_0,\dots,b_{n-1}\in \ZZ$ and $K:= \sum k_i$. Hence, 
\begin{align}
d^K\nu(q)^n+b_{n-1}\nu(q)^{n-1}+\dots+b_1\nu(q)+b_0 = \sum_{j=0}^{n-1}(-\hat{A}_{0j}h_j)f.
\end{align}
Multiplying both sides by $q^n$ leads to 
\begin{align}
d^K(q\nu(q))^n+b_{n-1}q(q\nu(q))^{n-1}+\dots+b_1q^{n-1}(q\nu(q))+b_0q^n = \sum_{j=0}^{n-1}(-q^n\hat{A}_{0j}h_j)f
\end{align}
We define $m:= q\nu(q)-1$, which is equivalent to $q\nu(q)= m+1$. For each $i\in \{1,\dots,n\}$ one can easily compute some polynomial $g_i$ with $(m+1)^i = 1+mg_i$. This leads to
\begin{align}
&d^K+b_{n-1}q+\dots+b_1q^{n-1}+b_0q^n\notag =\\
& \sum_{j=0}^{n-1}(-q^n\hat{A}_{0j}h_j)f + (-d^Kg_n-b_{n-1}qg_{n-1}-\dots-b_1q^{n-1}g_1)m
\label{Eq:End}
\end{align}
Since the first part of the equation lies in $\ZZ$, the second part must also lie in $\ZZ$. Furthermore, the first part cannot be zero because otherwise $q\mid d$ (or $q\mid 1$ if $K=0$) and the second part is a linear combination of the elements $f$ und $m$, which are both in $M$. By Lemma \ref{Lem:MaxToPrime}, one prime factor is in $M$ or there is evidence that $(M,\nu)$ is not an explicit maximal ideal.
\end{proof}
\subsection{Translation from the Classical to the Constructive Proof}
\label{Sec:Transl}
First, we would like to review and summarize how we come from the classical statement to the constructive one.
As stated in in Section \ref{Sec:Intro}, we wanted to translate a classical proof of $A\to B$ into a classical proof of $\Neg \tilde{A} \vee \tilde{B}$. Here, $A$ was the statement ``$M$ is maximal ideal'' and $B$ was the statement ``$M$ contains a prime number''.

When introducing classical maximal ideals, we presented two variants. As a result, the statement ``A is a maximal ideal'' is, in fact, a classical statement. Constructively, this statement must be made more specific. Already the first sentence of the classical proof  makes clear which version applies: Here, we concluded from $X\notin M$ that there exists a $g$ such that $gX-1\in M$. For this reason we had to use the classical stronger version of maximality, namely for all $f \notin M$ there is $g$ with $gf-1\in M$. Definition \ref{Def:ExplMaxIdeal} is then the strong negation of this version.

A similar situation occurred with the type of $M$: in the classical proof, $M$ was a subset, but the proof sometimes required knowing whether a given element was in $M$ or not.  Therefore, we replaced the vague notion of a set with the precise statement that $M$ is a total function into $\mathbb{B}$.
Moreover, we also made frequent use of this property in the constructive proof, which now allows us to discuss how we transformed the classical proof into a constructive one:

What immediately stands out is that, in the constructive proof, instances of modus ponens from the classical proof are often replaced by a case distinction. This is particularly the case when statements of the form $a\in M$ are considered. For example, a statement such as ``$a\in M$ because $M$ is a maximal ideal'' is replaced by the statement ``$a\in M$ or we have evidence that $M$ is not an explicitly maximal ideal''. 
This is another reason why the assumption that $a\in M$ is decidable was indeed necessary.

Apart from this transformation, the first part of the proof, where the non-constant $f\in M$ is obtained, is quite similar. 
However, in the classical proof, this $f$ is only used as a witness that $\ZZ[d^{-1}] \to \ZZ[X]/M$ is an integral ring extension. In the constructive proof, on the other hand, the notion of an integral ring extension and the embedding $\ZZ[d^{-1}] \to \ZZ[X]/M$  do not explicitly appear but are incorporated into the proof itself.
For this reason, $f$ is used further; in particular, it is applied to Lemma \ref{Lem:PolyDivision}, and appears in several subsequent equations, such as Equations (\ref{Eq:DivRem1}) to (\ref{Eq:End}). The same holds for the prime number $q$, which is coprime to  the leading coefficient $d$ of $f$. The classical proof only uses it in Lemma \ref{Lem:NotField}, whereas the constructive proof defines it directly after the construction of $f$ and then uses $q$ and $\nu(q)$ throughout the proof. 
One can see that the details of the proof become much clearer in the constructive proof, whereas in the classical proof, they are less explicit.

Note also that the constructive proof consistently argues within $\ZZ[X]$ or $\ZZ$. As $\ZZ$ and $\ZZ[X]$ are concrete objects, we do not need to make case distinctions but can argue directly. Which we did to use statements such as $\hat{A}A = \det(A)I$ and ``the first part can not be zero as otherwise $q\mid d$ (or $q\mid 1$ if $K=0$)''.
\subsection{The Constructive Content}
\label{Sec:ConstrCont}
An Python implementation of the algorithm, derived from the constructive proof in Section \ref{Sec:ConstProof}, is provided at the author's GitHub page (\url{https://github.com/FranziskusWiesnet/maxzx}) using the built-in computer algebra system \textit{SymPy} \cite{sympy2024}. The main algorithm is referred to as \texttt{MaxZX}. In addition, there is a simple algorithm \texttt{unbounded\_search} that takes any $M:\ZX \to \mathbb{B}$ and performs an unbounded search over all prime numbers and tests which prime lies in $M$.
There are two examples: one is the ideal generated by $X^2+1$ and $3$, and the other is the ideal generated by $X$ and $1019$.

While the unbounded search finds a prime (in this case, 3) in $M$ significantly faster than \texttt{MaxZX} in the first example, the situation is clearly the opposite in the second case. The larger the prime number and the higher the runtime of $M$, the longer the unbounded search takes. In contrast, the runtime of \texttt{MaxZX} scales with the runtime of $M$ and $\nu$.

\texttt{MaxZX} also has the advantage that it allows the use of arbitrary subsets and functions. Moreover, it provides a witness that explains why the conditions are not satisfied. For example, if we pass the constant false function $(\lambda_{-} \operatorname{False})$ for $M$ and the zero function $(\lambda_- 1)$ for $\nu$ to \texttt{MaxZX}, it returns $(\operatorname{False}, 3, x)$. This indicates that $(M, \nu)$ is not an explicit maximal ideal, as witnessed by $X\nu(X) - 1 \notin M$.

One can now proceed, for example, by setting $\nu(x) = 2$ and $M(2x - 1) = \text{true}$. Then \texttt{MaxZX} will produce a different witness showing why $(M, \nu)$ is not a maximal ideal. Then one can further modify $M$ and $\nu$ so that this case also does not lead to a counterexample, and \texttt{MaxZX} outputs a different counterexample or a prime number.
 This process can be continued as far as one wishes. This is illustrated in the Python file with the example \texttt{M\_2} and \texttt{nu\_2} in the Python code.
In \cite{powell2019algorithmic,powell2022universal}, we followed a similar procedure in the context of general maximal objects. This demonstrates that even counterexamples or failure evidence can be informative in constructive mathematics.

\section{Outlook}
\subsection{Further Applications of Material Interpretation}
The material interpretation offers a versatile framework with potential applications in various areas of mathematics and logic.
The example above served as a case study demonstrating the underlying idea. There are many cases where material interpretation can be of substantial benefit. The aim of future research will be to explore these examples further. 
We highlight the following two applications, where some preliminary work has already been carried out:
\subsubsection{Zariski's Lemma and Hilbert's Nullstellensatz.}
\label{Sec:Zariskis}
Zariski’s Lemma can be stated as follows:
\begin{quote}
Let $K$ be a field and $R$ a $K$-algebra, which is also a field. Suppose that $R = K[x_1,\dots,x_n]$ for some $x_1,\dots,x_n\in R$. Then $R$ is algebraic over $K$, i.e.,~there are non-zero $f_1,\dots,f_n\in K[X]$ such that $f_i(x_i)=0$ for all $i$.
\end{quote} 
An algorithmic version of Zariski's Lemma is considered in \cite{wiesnet2021algorithmic}. 
There, a constructive proof of Zariski's Lemma is provided, from which an algorithm is then developed, and the correctness of the algorithm is proved. 
The algorithmic version is divided into several lemmas, but they all share the structure in which an algorithm is first presented, followed by a theorem about the algorithm. These theorems typically state that if the objects in the algorithm satisfy certain axioms (e.g.~field or ring axioms), then the algorithm’s output satisfies certain properties. Thus, a material interpretation means that either the output satisfies the desired properties, or there exists a witness showing that one of the axioms about the objects in the algorithm does not hold.
A proof sketch was already provided in Section 4.5.1 of \cite{wiesnet2021computational} and will be the subject of further research.
In Section 4.5.2 of \cite{wiesnet2021computational}, it was shown that applying the material interpretation to Zariski’s Lemma yields an algorithmic version of Hilbert’s Nullstellensatz, where Hilbert's Nullstellensatz states the following:
\begin{quote}
Let $K$ be an algebraically closed field, and let $f_1,\dots,f_m\in K[X_1,\dots,X_n]$ be given. Then either there are $g_1,\dots,g_m \in K[X_1,\dots,X_n]$ with $g_1f_1+\cdots +g_mf_m =1$ or there are $x_1,\dots,x_n\in K$ with $f_i(x_1,\dots,x_n) = 0$ for all $i$.
\end{quote}

What remains is to fully prove the material interpretation of Zariski's Lemma and to isolate its constructive content, preferably as a computer program.
As previously mentioned, the case study in Section \ref{Sec:CaseStudy} of this article is a special case of Zariski's Lemma. More precisely, it follows from Zariski's Lemma that, considering the embedding $\QQ \to \ZZ[X]/M$, the element $X$ is algebraic over $\QQ$, i.e.~ there is some $g\in \QQ[X]$ with $g(X) = 0 \in \ZZ[X]/M$. 
If we now multiply $g$ by the least common denominator of all its coefficients, we indeed obtain $f$, which is constructed in the first part of the proof. Thus, the first part of the proof was essentially the base case of the inductive proof of Zariski’s Lemma. 
Interestingly, Lemma \ref{Lem:IntField} and Lemma \ref{Lem:IntExt} are also applied in the induction step of Zariski's Lemma \cite[Lemma 1--2]{wiesnet2021algorithmic}.
Therefore, studying it is particularly well-suited as a stepping stone to understanding the material interpretation of Zariski's Lemma, and consequently, of Hilbert's Nullstellensatz.
\subsubsection{The Universal Krull-Lindenbaum Lemma.}
In \cite{rinaldi2016universal}, Schuster and Rinaldi gave a constructive version of the Universal Krull-Lindenbaum lemma (UKLL), and in \cite{powell2019algorithmic,powell2022universal} an algorithm realising UKLL under some general assumptions is developed. 

\noindent \textbf{Ring theory.} In the context of ring theory the universal Krull-Lindenbaum lemma says that the intersection of all prime ideals of a commutative ring $R\neq \{0\}$ is equal to the nilradical:
\begin{align*}
\bigcap \left\{P\subseteq R\mid P\text{ is a prime ideal in }R\right\} = \sqrt{\langle 0\rangle}
\end{align*}

For a fixed $r\in R$, the construed algorithm, takes a so-called \textit{Krull functional} as input. In the context of ring theory a Krull functional for a subset takes an arbitrary subset $P\subseteq R$ and returns a witness that either $r\in P$ or that $P$ is not a prime ideal. This can be seen as a material interpretation of the statement ``r is in every prime ideal''.
The output of the constructed algorithm in the context of ring theory is indeed an $e\in \mathbb{N}$ such that $r^e = 0$.
Through the material interpretation of general proofs of the following statements, which use UKLL  in the version above, we were indeed able to give constructive variants of the following theorems:
\begin{quote}
Let $A$ be a ring and $f = \sum_{i=0}^n a_iX^i \in A[X]$ be a unit in $A[X]$, then $a_i$ is nilpotent for each $i>0$.
\end{quote}

\begin{quote}
Let $A$ be a ring and $f = \sum_{i=0}^n a_iX^i,g = \sum_{i=0}^m b_iX^i \in A[X]$ be two polynomials with $fg = \sum_{i=0}^{n+m} c_iX^i$, then
\begin{align*}
a_ib_j \in \sqrt{\langle c_0,\dots,c_{i+j} \rangle}\ .
\end{align*}
\end{quote}
The first statement is a typical example and has already been studied, among others, in \cite{persson1999application,schuster2013induction}. The second statement is known as theorem by Gauß-Joyal and its constructive meaning was also studied as a standard example in many articles such as \cite{banaschewski1996polynomials,coquand2009space}.

Hence, in the case of ring theory and on test examples, the material interpretation has already shown success, partly due to the simplicity of the test examples. In the context of ordered fields, however, the proofs are significantly more complex:

\noindent\textbf{Ordered Fields.}
For a field $K$ a total order $\leq$ on $K$ is called a \textit{field order}, if for all $a,b,c\in K$: $a\leq b\rightarrow a+c \leq b +c$ and $0\leq a \wedge 0\leq b \rightarrow 0\leq a\cdot b$. We call a subset $U\subseteq K$ an \textit{order} if there is a field order $\leq$ on $K$ such that $U = \{ x\in K \mid 0 \leq x\}$.
In this setting the universal Krull-Lindenbaum-Lemma coincides with the Artin-Schreier Theorem and says
\begin{align*}
\bigcap \left\{U\subseteq K\mid U\text{ is an order of }K\right\} = \left\{\sum_{i=0}^n x_i^2 \ \middle|\ n\in \NN,\ x_0,\dots,x_n\in K\right\}.
\end{align*}
This statement is only non-trivial if $-1$ is not a sum of squares in $K$ which implies $\operatorname{char}(K) = 0$. The fields $\RR$ and $\QQ$ and all function fields over them fulfil this property.

Artin proved the Artin-Schreier Theorem and used it to prove Hilbert's 17th Problem \cite{artin1927ueber}:

\begin{quote}
Let $f\in \QQ[X_1,\dots,X_n]$ be a polynomial in $n\in\NN$ variables such that $f(x)\geq 0$ for all $x\in \mathbb{Q}^n$. Then $f$ is the sum of squares in $\mathbb{Q}(X_1,\dots,X_n)$.
\end{quote}

\noindent Artin's proof of Hilbert's 17th problem, however, is very complex and involves, in addition to the Artin-Schreier theorem, Sturm's theorem \cite[Section 5.8.2]{freund2007stoer} and the fact that every ordered field has a real closure. A constructive consideration of the real closure was given by Lombardi and Roy \cite{lombardi1991elementary} and could be used to construct a Krull functional.

There is already a constructive proof of Hilbert's 17th problem by Delzell \cite{delzell1984continuous}, and it would be very interesting to compare the computational content of Delzell's proof with that of the proof using a Krull functional.
However, since Artin's proof is very complex, it will hardly be possible to trace the computational content using only pen and paper. Arguably, the same applies to the material interpretation of Zariski's Lemma for the section above, especially when aiming to extract its precise constructive content. Therefore, computer assistance will be absolutely necessary.
\subsection{Implementation of the Material Interpretation}
In Section \ref{Sec:ConstrCont}, we discussed an implementation of the algorithm in Python. The algorithm was created manually. However, it is of interest to explore whether this algorithm could be generated automatically. This applies not only to the algorithm itself, but also to the entire material interpretation as well. An implementation in a proof assistant such as Agda, Coq, or Lean would be highly valuable.

The ultimate goal would be to at least partially automate the material interpretation.
As we have seen in Section \ref{Sec:Transl}, applications of modus ponens are often replaced by a case distinction. Such transformations could be implemented in a proof assistant.
Furthermore, in the classical proof, a case distinction based on $x\in M$ was performed and the property that 
$M$ is a maximal ideal was utilized to imply that for $x\notin M$, there exists an $a$ with $ax-1\in M$.
In the constructive setting, this motivates defining $M$ as a total Boolean-valued function and the definition of an explicit maximal ideal. 
Thus, it can be observed that the modified statement can, in fact, be extracted from the classical proof. This suggests that automation might be possible here as well.

\bibliographystyle{plain}
\bibliography{bib_maxzx}

\end{document}